\numberwithin{equation}{section}
\newcommand{\bC}{\mathbb{C}}
\newcommand{\bE}{\mathbb{E}}
\newcommand{\bP}{\mathbb{P}}
\newcommand{\bQ}{\mathbb{Q}}
\newcommand{\bR}{\mathbb{R}}
\newcommand{\bZ}{\mathbb{Z}}
\newcommand{\cB}{\mathcal{B}}
\newcommand{\cC}{\mathcal{C}}
\newcommand{\cE}{\mathcal{E}}
\newcommand{\cF}{\mathcal{F}}
\newcommand{\cG}{\mathcal{G}}
\newcommand{\cH}{\mathcal{H}}
\newcommand{\cI}{\mathcal{I}}
\newcommand{\cK}{\mathcal{K}}
\newcommand{\cN}{\mathcal{N}}
\newcommand{\cS}{\mathcal{S}}
\theoremstyle{plain} \newtheorem{Theo}{Theorem}[section]
\theoremstyle{plain} \newtheorem{Lemma}[Theo]{Lemma}
\theoremstyle{plain} 
\theoremstyle{plain} 
\theoremstyle{remark} \newtheorem{Rem}[Theo]{Remark}
\theoremstyle{definition} 
\theoremstyle{definition} 
\theoremstyle{remark}
\begin{document}

\title[A zero--one law for linear transformations]{A  zero--one law 
for \\
linear transformations \\
of L\'evy noise}

\author{Steven N. Evans}

\email{evans@stat.Berkeley.EDU}

\address{Department of Statistics \#3860 \\
 University of California at Berkeley \\
367 Evans Hall \\
Berkeley, CA 94720-3860 \\
U.S.A}

\thanks{Research supported in part by NSF grants DMS-0405778 and DMS-0907630}

\date{\today}

\keywords{Hewitt--Savage, white noise, Poisson noise, 
special linear group, orthogonal group, 
Auerbach problem, Jordan canonical form}

\subjclass[2000]{Primary:  60F20, 28D15, 60G57; Secondary: 15A21, 60G55, 60H40.}

\begin{abstract}
A L\'evy noise on $\bR^d$ assigns 
a random real ``mass'' $\Pi(B)$ to each Borel subset $B$ of $\bR^d$ with finite Lebesgue measure. 
The distribution of $\Pi(B)$ only depends on the Lebesgue measure of $B$, and
if $B_1, \ldots, B_n$ is a finite collection of pairwise disjoint sets, 
then the random variables $\Pi(B_1), \ldots, \Pi(B_n)$ are independent
with $\Pi(B_1 \cup \cdots \cup B_n) = \Pi(B_1) + \cdots + \Pi(B_n)$
almost surely.  In particular, the distribution of $\Pi \circ g$ is the same
as that of $\Pi$ when $g$ is a bijective transformation of $\bR^d$ that preserves Lebesgue
measure. It follows from the Hewitt--Savage zero--one law that any event which is
almost surely invariant under the mappings $\Pi \mapsto \Pi \circ g$ for
every Lebesgue measure preserving bijection $g$ of $\bR^d$ must have
probability $0$ or $1$.  We investigate whether certain smaller groups of
Lebesgue measure preserving bijections also possess this property.  We show
that if $d \ge 2$, the L\'evy noise is not purely deterministic, 
and the group consists of linear transformations and is closed, then the 
invariant events all have probability $0$ or $1$ if and only if the group is not compact.
\end{abstract}

\maketitle

\section{Introduction}

The zero-one law of Hewitt and Savage \cite{MR0076206} concerns sequences of
of independent, identically distributed, random variables $X = \{X_k : k \in \bZ\}$
on some probability space $(\Omega, \cF, \bP)$.
It says that if $A \subseteq \bR^\bZ$ is any product measurable set such that
$g A$ and $A$ differ by a $\bP$-null set
for all bijections $g : \bZ \rightarrow \bZ$, then
$\bP\{X \in A\}$ is either $0$ or $1$.  Of course, it is not important
that $X$ is indexed by $\bZ$: we could replace $\bZ$ by any countable set.

One natural family of continuous analogues of the family of sequences
of independent, identically distributed, random variables
is the family the L\'evy noises.   Recall that
a L\'evy noise on $\bR^d$ is defined as follows.
Let $\mu$ be an infinitely divisible probability measure on $\bR$.
There is an associated convolution semigroup $(\mu_t)_{t \ge 0}$ of probability
measures on $\bR$: that is, 
\begin{itemize}
\item
$\mu_1$ is $\mu$
\item 
$\mu_0$ is $\delta_0$, the point mass at $0$,
\item
$\mu_s \ast \mu_t = \mu_{s+t}$, for all $s,t \ge 0$, where
$\ast$ denotes convolution, 
\item
the weak limit as $t \downarrow s$ of
$\mu_t$ is $\mu_s$ for all $s \ge 0$.
\end{itemize}
Denote the Borel $\sigma$-field of $\bR^d$ by $\cB(\bR^d)$.
Write $\Lambda$ for Lebesgue measure
on $\bR^d$ and let $\cC(\bR^d)$ be the subset of
$\cB(\bR^d)$ consisting of sets with finite Lebesgue measure.
A L\'evy noise on $\bR^d$ corresponding to $\mu$ is
a collection of real-valued random variables
$\Pi = \{\Pi(B) : B \in \cC(\bR^d)\}$ on some probability
space $(\Omega, \cF, \bP)$ with the properties:
\begin{itemize}
\item
the random variable
$\Pi(B)$ has distribution $\mu_{\Lambda(B)}$ for all $B \in \cC(\bR^d)$,
\item
if $B_1, \ldots, B_n$ is a finite collection of pairwise disjoint sets
in $\cC(\bR^d)$, then the random variables
$\Pi(B_1), \ldots, \Pi(B_n)$ are independent
and $\Pi(B_1 \cup \cdots \cup B_n) = \Pi(B_1) + \cdots + \Pi(B_n)$
almost surely.  
\end{itemize}
For each infinitely divisible probability measure $\mu$ 
it is possible to construct (via Kolmogorov's extension theorem)
a corresponding L\'evy noise on $\bR^d$ for every $d$.  Note that
if $\mu$ is not a point mass, then the
random variable $\Pi(B)$ is not almost surely
constant when $B \in \cC(\bR^d)$ is a set with $\Lambda(B)>0$.

The most familiar examples of L\'evy noises are the usual
{\em Gaussian white noise}, in which case $\mu$
is the standard Gaussian probability distribution, and the
{\em homogeneous Poisson random measures}, in which case
$\mu$ is a Poisson distribution with some positive
mean.

Let $\Sigma$ be the Cartesian product $\bR^{\cC(\bR^d)}$ and
write $\cS$ for the corresponding product $\sigma$-field.  The L\'evy noise
$\Pi$ is a measurable map from $(\Omega, \cF)$ to $(\Sigma, \cS)$.
Given a bijection $g : \bR^d \rightarrow \bR^d$ 
that is Borel measurable with a Borel measurable inverse,  
there is a corresponding 
bijection $T_g : \Sigma \rightarrow \Sigma$ that maps the element
$(\pi(B))_{B \in \bR^{\cC(\bR^d)}}$ to the element 
$(\pi(g^{-1} B))_{B \in \bR^{\cC(\bR^d)}}$.
The mapping $T_g$ and its inverse are both measurable.
Note that $T_g \circ \Pi$ has the same distribution as
$\Pi$ when $g$ preserves the Lebesgue measure $\Lambda$.

If $G$ is group of Lebesgue measure preserving bijections, then the corresponding 
{\em invariant $\sigma$-field} $\cI_G$ is the collection of sets  $S \in \cS$
with the property 
\[
\bP(\{\Pi \in S \} \triangle \{T_g \Pi \in S\}) = 0
\text{ for all }g \in G, 
\]
where $\triangle$ denotes the symmetric difference.

It follows readily from the Hewitt--Savage zero--one law that if
$G$ is the group of all Borel measurable bijections that have
Borel measurable inverses and preserve Lebesgue measure, 
then the invariant $\sigma$-field $\cI_G$ consists of events
with probability $0$ or $1$.  

However, the same conclusion still holds for much ``smaller'' groups $G$.
For example, it holds when $G$ is $\bR^d$ acting on itself via translations
(this follows from the multiparameter ergodic theorem 
and the Kolmogorov zero--one law).
On the other hand, the conclusion fails when $\mu$ is not a point mass
and $G$ is the group $\mathrm{0}(\bR^d)$
of linear transformations of $\bR^d$ that preserve the usual Euclidean
inner product (and hence also preserve Lebesgue measure); for example, in that
case the random variable $\Pi(\{x \in \bR^d : \|x\| \le 1\})$ is $\cI_G$-measurable
but it is not almost surely constant.

Our aim in this paper is to characterize the closed groups
of linear transformations of $\bR^d$ that preserve Lebesgue measure
and for which the corresponding invariant $\sigma$-field
consists of events with probability $0$ or $1$.

Recall that a linear mapping of $\bR^d$ into itself preserves Lebesgue measure
if and only if the corresponding matrix with respect to some basis
of $\bR^d$ has a determinant with absolute value $1$.  Of course,
if this condition holds for one basis, then it holds for all bases.
The collection of linear maps that preserve Lebesgue measure
is a group.  Denote this group by $\Gamma$.  We have
$\Gamma = (+1) \times \mathrm{Sl}(\bR^d) \sqcup (-1) \times \mathrm{Sl}(\bR^d)$,
where $\mathrm{Sl}(\bR^d)$ is the group of linear maps with
determinant $1$.  We will think of $\Gamma$ as either a group
of linear transformations or as a group of matrices.

Our main result is the following.

\begin{Theo}
\label{T:main}
Suppose that $d \ge 2$ and $\mu$ is not a point mass.
Let $G$ be a closed subgroup of $\Gamma$.
The corresponding invariant $\sigma$-field $\cI_G$ 
consists of sets with probability $0$ or $1$
if and only if $G$ is not compact.
\end{Theo}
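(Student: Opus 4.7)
The plan is to prove the two implications of the theorem separately.

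For the ``only if'' direction (compactness of $G$ implies non-trivial invariant field), suppose $G$ is compact. Then $G$ is bounded in operator norm, so the set $A := \bigcup_{g \in G} gK$, where $K = \{x \in \bR^d : \|x\| \le 1\}$, is bounded and has positive finite Lebesgue measure. By construction $gA = A$ for every $g \in G$, so $T_g\Pi(A) = \Pi(g^{-1}A) = \Pi(A)$ almost surely. Thus $\Pi(A) \sim \mu_{\Lambda(A)}$ is $\cI_G$-measurable; since $\mu$ is not a point mass and $\Lambda(A) > 0$, it is not almost surely constant, which produces $\cI_G$-events of probability strictly between $0$ and $1$.

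For the ``if'' direction (non-compactness implies triviality), the main geometric step is: $\Lambda(B \cap gB) \to 0$ as $\|g\| \to \infty$ in $\Gamma$, for every bounded Borel $B$. I would prove this via singular value decomposition: writing $g = k_1 D k_2$ with $k_i$ orthogonal, $D = \mathrm{diag}(\sigma_1, \ldots, \sigma_d)$, $\sigma_1 \ge \cdots \ge \sigma_d > 0$ and $\prod \sigma_i = 1$, the image $gK$ is a rotated ellipsoid with semi-axes $\sigma_i$, and $gK \cap K$ is contained (after undoing the outer rotation $k_1$) in the axis-aligned box with half-sides $\min(\sigma_i, 1)$. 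Hence $\Lambda(gK \cap K) \le 2^d \prod_{i : \sigma_i < 1} \sigma_i = 2^d / \prod_{i : \sigma_i \ge 1} \sigma_i \le 2^d/\|g\|$, and scaling yields $\Lambda(gB \cap B) \le C_B/\|g\|$ for any bounded $B$. To deploy this inside $G$, note that a bound $\|g\| \le M$ in $\Gamma$ forces all singular values into $[M^{-(d-1)}, M]$ (via $\prod \sigma_i = 1$), so the closed non-compact subgroup $G$ must contain a sequence $g_n$ with $\|g_n\| \to \infty$, whence $\Lambda(B \cap g_n B) \to 0$.

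Given the lemma, the rest is a standard mixing argument. Fix $E \in \cI_G$ and $\varepsilon > 0$, and approximate $E$ by $E_\varepsilon \in \cF_A := \sigma(\Pi(B) : B \subseteq A, B \in \cC(\bR^d))$ with $\bP(E \triangle E_\varepsilon) < \varepsilon$ and $A$ bounded (possible since $\cS$ is generated by $\{\Pi(B) : B \in \cC(\bR^d)\}$). Choose $g \in G$ so that $\Lambda(A \cap g^{-1}A)$ is arbitrarily small, and set $A_0 = A \cap g^{-1}A$, $A_1 = A \setminus g^{-1}A$, $A_2 = g^{-1}A \setminus A$. The independent-increments property of L\'evy noise makes $\cF_{A_0}, \cF_{A_1}, \cF_{A_2}$ independent, while $E_\varepsilon \in \cF_{A_0} \vee \cF_{A_1}$ and its $T_g$-translate lies in $\cF_{A_0} \vee \cF_{A_2}$, so the two events are conditionally independent given $\cF_{A_0}$. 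As $\Lambda(A_0) \to 0$, the noise $\Pi|_{A_0}$ tends to the zero noise in distribution (because $\mu_t \to \delta_0$ weakly as $t \to 0$), driving the relevant conditional expectations to constants and giving $\bP(E_\varepsilon \cap \{T_g\Pi \in S_\varepsilon\}) \to \bP(E_\varepsilon)^2$. Combined with invariance $\bP(E \cap \{T_g\Pi \in S\}) = \bP(E)$, this yields $|\bP(E) - \bP(E)^2| = O(\varepsilon)$ and hence $\bP(E) \in \{0, 1\}$. The main obstacle is rigorously justifying this last convergence without moment assumptions on $\mu$; I plan to handle it by pre-approximating $E_\varepsilon$ by a bounded continuous cylinder function of finitely many $\Pi(B_j)$ with $B_j \subseteq A$, then using $\Pi(B_j) = \Pi(B_j \cap A_0) + \Pi(B_j \cap A_1)$ with the first summand converging to $0$ in probability.
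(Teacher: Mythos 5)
Your proposal is correct, and the hard direction takes a genuinely different route from the paper. For the compact case the two arguments are essentially the same in spirit: the paper conjugates $G$ into $\mathrm{0}(\bR^d)$ (Remark~\ref{R:compact_subgroups}) to produce an invariant ellipsoid $hU$, while you take the orbit $A=\bigcup_{g\in G}gK$ directly (note $A$ is the continuous image of the compact set $G\times K$, hence compact and Borel); both exhibit a $G$-invariant set of positive finite measure. For the non-compact case the paper first invokes Merzljakov's theorem to extract a single $g\in G$ whose cyclic group has non-compact closure, then uses the real Jordan canonical form to build a nested family of shrinking sets $(D_t)$ with $g^hD_{t''}\subseteq D_{t'}$ (Lemma~\ref{L:shrinking_sets}), and concludes by a martingale/reverse-martingale variance comparison resting on the equivariance Lemma~\ref{L:equivariance} and Lemma~\ref{L:Kolmogorov}. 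You instead prove the quantitative dispersion bound $\Lambda(gK\cap K)\le 2^d/\|g\|$ by singular value decomposition, observe that non-compactness of a closed subgroup of $\Gamma$ already yields $\|g_n\|\to\infty$ (boundedness of operator norms plus $\prod\sigma_i=1$ would force compactness), and run a direct mixing argument. This bypasses both Merzljakov's theorem --- the least elementary ingredient of the paper, tied to the Auerbach problem --- and the Jordan-form case analysis of Section~\ref{S:Jordan}; what the paper's route buys is the structural refinement that a single well-chosen element already generates a cyclic group with trivial invariant field, together with the nested-sets/martingale framework, whereas your argument only needs a divergent sequence in $G$. One small loose end: sets in $\cC(\bR^d)$ have finite measure but need not be bounded, so approximating $E$ by an event in $\cF_A$ with $A$ \emph{bounded} requires an extra truncation $B_j\mapsto B_j\cap rK$, justified by $\Lambda(B_j\setminus rK)\to 0$ and $\mu_t\Rightarrow\delta_0$ --- the same device you already deploy for $\Pi(B_j\cap A_0)$, so this is easily repaired.
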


We prove Theorem~\ref{T:main} 
in Section~\ref{S:main_proof}
after some preparatory results
in Section~\ref{S:preparatory}.
The proof also uses some consequences of
the Jordan canonical form for matrices 
that are not similar to orthogonal matrices.
We establish the relevant results in Section~\ref{S:Jordan}.

\begin{Rem}
\label{R:compact_subgroups}
We note that a closed subgroup $G$ of $\Gamma$ is compact
if and only if there is an invertible matrix $h$ such that
$h^{-1} G h \subseteq \mathrm{0}(\bR^d)$, where
$\mathrm{0}(\bR^d)$ is the group of 
$d \times d$ orthogonal matrices.
This fact follows from general Lie group theory
and is well-known, but we have
found an explicit statement with a self-contained accompanying
proof to be somewhat elusive.
For the sake of completeness, we note the following
simple bare hands proof based on Weyl's ``unitarian trick''.  
Let $\eta$ be the normalized Haar measure on $G$. 
Define a real inner product $\langle \cdot, \cdot \rangle_\eta$
on $\bR^d$ by $\langle x , y \rangle_\eta := \int_G (g x)^\top (g y) \, \eta(dg)$,
where $u^\top$ denotes the transpose of the vector $u$.
It is clear that $\langle g x , g y \rangle_\eta = \langle x , y \rangle_\eta$
for any $g \in G$ and $x,y \in \bR^d$.  
There is a positive definite symmetric matrix $S$ such that 
$\langle x , y \rangle_\eta = x^\top S y$
(see Exercise 14 in Section 7.2 of \cite{MR1084815}).
Let $h = S^{-\frac{1}{2}}$ be the inverse of the usual positive definite symmetric
square root of $S$ (see Theorem 7.2.6 of \cite{MR1084815}).  Then, 
\[
\begin{split}
(h^{-1} g h x)^\top (h^{-1} g h y)
&=
x^\top h g^\top h^{-1} h^{-1} g h y
=
x^\top h g^\top S g h y \\
&=
\langle g h x , g h y \rangle_\eta
=
\langle h x, h y \rangle_\eta \\
&=
x^\top h S h y 
=
x^\top y. \\
\end{split}
\]
Thus, $h^{-1} g h$ preserves the usual Euclidean inner product
on $\bR^d$ and is an orthogonal matrix, as required.

Suppose that $G$ is compact and
$h$ is such that $h^{-1} G h$ consists of orthogonal matrices.  Let
$U$ be the closed unit ball in $\bR^d$ for the usual Euclidean
metric. Then, $g (h U) = (h U)$ for all $g \in G$.  Conversely,
suppose that $G$ is a closed subgroup of $\Gamma$ such that
$g K \subseteq K$ for all $g \in G$, where is
a compact set with $0$ in its interior.  It follows that
the $\ell^2$ operator norms of the elements of $G$ are bounded,
and hence $G$ is compact.
\end{Rem}

We finish this introduction with some comments about the motivations
that led us to consider the question we study in this paper.

A first motivation comes from the forthcoming paper
\cite{Hol_Lyo_Soo_09} on ``deterministic Poisson thinning''
that we heard about in a lecture by Omer Angel during the
2009 Seminar on Stochastic Processes held at Stanford University.

Let $M$ be the space of non-negative integer valued Radon measures
on $\bR^d$ for which all atoms are of mass $1$ (that is,
$M$ is the space of possible realizations of a {\em simple point
point process} on $\bR^d$).
Note that $M$ may be viewed as a subset of $\Sigma$.
Equip $M$ with the vague topology.
It is shown in \cite{Hol_Lyo_Soo_09} that for $0 < \alpha < \beta$ there is a
Borel measurable map $\Theta: M \rightarrow M$ such that
$\Theta(m) \le m$ for all $m \in M$ and 
if $\Pi$ is a homogeneous Poisson process with intensity $\beta$, then 
$\Theta(\Pi)$ is a homogeneous Poisson process with intensity $\alpha$.
Moreover, if $G$ is the group of affine Euclidean isometries of 
$\bR^d$, then $\Theta \circ T_g = T_g \circ \Theta$ for all $g \in G$.

It is natural to ask if this equivariance property can hold
for some larger group $G$ of affine Lebesgue measure preserving maps.  Suppose that
this is possible.  Take $\bP$ to be the distribution of the
homogeneous Poisson process with intensity $\beta$.  Write $\bP^x$,
$x \in \bR^d$, for the associated family of Palm distributions.
That is, $\bP^x$ is, heuristically speaking, the distribution
of a pick from $\bP$ conditioned to have an atom of mass $1$ at $x$.  
In this Poisson case, $\bP^x$ is, of course, just the distribution of
the random measure obtained by taking a pick from $\bP$ and
adding an extra atom at $x$.  It follows from the equivariance of
$\Theta$ under $G$ that if we let
$H$ be the subgroup of $G$ that fixes $0$, then the
map $\gamma: M \rightarrow \{0,1\}$ given by $\gamma(m) = (\Theta(m))(\{0\})$
has the property $\gamma \circ T_h = \gamma$, $\bP^0$-a.s.
for all $h \in H$, and $\bP^0\{\gamma = 1\} = \frac{\alpha}{\beta}$.  
Consequently, if we define $\epsilon: M \rightarrow \{0,1\}$
by $\epsilon(m) = \gamma(m + \delta_0)$, where $\delta_0$ is the unit point
mass at $0$, then $\epsilon \circ T_h = \epsilon$, $\bP$-a.s. for all $h \in H$,
and $\bP\{\epsilon = 1\} = \frac{\alpha}{\beta}$.

However, Theorem~\ref{T:main} says that this is impossible if
$H$ strictly contains the group $\mathrm{0}(\bR^d)$ of
linear Euclidean isometries.

A second motivation comes from an analogy with a result in
\cite{MR0451399}.  Suppose now that $\bP$ is the distribution of a simple point
process on $\bR^d$.  If $\bP$ is invariant for all the transformations
$T_g$, $g \in G$, where $G$ is the group of all bijections that preserve
Lebesgue measure, then it follows from de Finetti's theorem that
$\bP$ is of the form $\int \bQ^\alpha \, q(d\alpha)$, where
$\bQ^\alpha$ is the distribution of the homogeneous Poisson process
on $\bR^d$ with intensity $\alpha$ and the mixing measure
$q$ is a probability measure on the nonnegative real numbers.  This
result may be thought of as a continuum analogue of the special
case of de Finetti's theorem which says that an exchangeable sequence of
$\{0,1\}$ valued random variables is a mixture of independent, identically
distributed, Bernoulli sequences.
A counterexample is presented in \cite{MR0451399} (see also \cite{MR546881})
demonstrating that if $G$ is replaced by the smaller group of affine Lebesgue measure 
preserving transformations, then such a conclusion is false.  

In the same way that this result addresses continuum
analogues of de Finetti's theorem for small groups of
measure preserving transformations, it is natural to consider
whether there are continuum analogues of the Hewitt--Savage
zero--one law for such groups.

\section{Preparatory results}
\label{S:preparatory}

Without loss of generality, we may suppose from now on that 
$\Omega = \Sigma$,
$\Pi$ is the identity map,
and $\cF$ is the $\bP$-completion of $\cS$.  Write
$\cN$ for the sub-$\sigma$-field of $\cF$ consisting
of sets with probability $0$ or $1$.

Given $B \in \cB(\bR^d)$, set 
$\cF_B := \sigma\{\Pi(C) : C \in \cC(\bR^d), \, C \subseteq B\} \vee \cN$.
Note for $g \in G$ that if $\Psi : \Omega \rightarrow \bR$
is $\cF_B$-measurable, 
then $\Psi \circ T_{g^{-1}}$ is $\cF_{g B}$-measurable, and, moreover,
if $\Upsilon : \Omega \rightarrow \bR$ is $\cF_{g B}$-measurable,
then $\Upsilon = \Psi \circ T_{g^{-1}}$ for some $\cF_B$-measurable $\Psi$.
Note also that $\cF_{B'} \subseteq \cF_{B''}$ when $B' \subseteq B''$.

\begin{Lemma}
\label{L:equivariance}
Suppose that $\Phi: \Omega \rightarrow \bR_+$ is a bounded
$\cI_G$-measurable function.  Then, for $g \in G$ and $B \in \cB(\bR^d)$,
\[
\bE \left[\Phi \, | \, \cF_B \right]
=
\bE \left[\Phi \, | \,  \cF_{g B} \right] \circ T_{g}.
\]
Consequently, the distribution of 
$\bE \left[\Phi \, | \,  \cF_{g B} \right]$
does not depend on $g \in G$.
\end{Lemma}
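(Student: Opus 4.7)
The plan is to verify directly that $\bE[\Phi \mid \cF_{gB}] \circ T_g$ is a version of $\bE[\Phi \mid \cF_B]$. The three ingredients one needs are: (a) $T_g$ and $T_{g^{-1}}$ preserve $\bP$, since $g$ preserves $\Lambda$; (b) because $\Phi$ is $\cI_G$-measurable, $\Phi \circ T_g = \Phi$ almost surely; and (c) the correspondence noted immediately before the lemma: $\Psi$ is $\cF_B$-measurable if and only if $\Psi \circ T_{g^{-1}}$ is $\cF_{gB}$-measurable.

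First I would set $\Psi := \bE[\Phi \mid \cF_{gB}] \circ T_g$. By (c), $\Psi$ is $\cF_B$-measurable. To identify $\Psi$ with $\bE[\Phi \mid \cF_B]$, I would test against an arbitrary bounded $\cF_B$-measurable function $A$. Using (c) again, $A \circ T_{g^{-1}}$ is a bounded $\cF_{gB}$-measurable function, so by the defining property of $\bE[\,\cdot\mid \cF_{gB}]$,
\[
\bE\bigl[\bE[\Phi \mid \cF_{gB}]\,(A \circ T_{g^{-1}})\bigr] = \bE\bigl[\Phi\,(A \circ T_{g^{-1}})\bigr].
\]
Then I would apply the measure-preserving property (a) to the left side with the map $T_g$, giving
\[
\bE\bigl[\bE[\Phi \mid \cF_{gB}]\circ T_g \cdot A \bigr] = \bE[\Psi\, A],
\]
and apply (a) together with the invariance (b) to the right side:
\[
\bE\bigl[\Phi\,(A \circ T_{g^{-1}})\bigr] = \bE\bigl[(\Phi \circ T_g)\,A\bigr] = \bE[\Phi\, A].
\]
Combining these yields $\bE[\Psi\,A]=\bE[\Phi\,A]$ for every bounded $\cF_B$-measurable $A$, which by definition of conditional expectation identifies $\Psi$ with $\bE[\Phi \mid \cF_B]$.

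For the consequence about distributions, since $T_g$ preserves $\bP$, the random variable $\bE[\Phi \mid \cF_{gB}] \circ T_g$ has the same law under $\bP$ as $\bE[\Phi \mid \cF_{gB}]$ itself; but by what was just established this law equals the law of $\bE[\Phi \mid \cF_B]$, which does not involve $g$.

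I don't anticipate a real obstacle here, as the argument is essentially a bookkeeping exercise; the only point that requires attention is verifying that the a.s. invariance $\Phi \circ T_g = \Phi$ does follow from $\Phi$ being $\cI_G$-measurable (which is immediate by approximating $\Phi$ by simple functions built from indicators of sets in $\cI_G$) and keeping the directions of $T_g$ versus $T_{g^{-1}}$ straight when moving expectations across the change of variables.
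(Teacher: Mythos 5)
Your argument is correct and is essentially the paper's own proof: both verify that $\bE[\Phi \mid \cF_{gB}] \circ T_g$ is $\cF_B$-measurable via the measurability correspondence stated before the lemma, and then identify it with $\bE[\Phi \mid \cF_B]$ by testing against bounded $\cF_B$-measurable functions, using the $\bP$-preservation of $T_g$ and the a.s.\ invariance $\Phi \circ T_g = \Phi$. The only difference is cosmetic (you transform the two sides of the defining identity for $\bE[\,\cdot \mid \cF_{gB}]$ separately, while the paper writes one chain of equalities), and your handling of the distributional consequence matches the paper's as well.
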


\begin{proof}
 By the remarks prior to the
the statement of the lemma, 
$\bE \left[\Phi \, | \,  \cF_{g B} \right] \circ T_{g}$
is $\cF_B$-measurable.  Moreover,
if $\Psi : \Omega \rightarrow \bR_+$ is any bounded
$\cF_B$-measurable function, then
\[
\begin{split}
\bE \left[\Phi \times \Psi \right]
& =
\bE \left[\left(\Phi \circ T_{g^{-1}} \right)\times \left(\Psi \circ T_{g^{-1}}\right)\right] \\
& =
\bE \left[\Phi \times  \left(\Psi \circ T_{g^{-1}}\right) \right] \\
& =
\bE \left[ \bE \left[\Phi \, | \,  \cF_{g B} \right] \times \left(\Psi \circ T_{g^{-1}} \right)\right] \\
& =
\bE \left[\left( \bE \left[\Phi \, | \,  \cF_{g B} \right] \circ T_g \right) \times \left( \Psi \circ T_{g^{-1}} \circ T_g \right) \right] \\
& =
\bE \left[ \left( \bE \left[\Phi \, | \,  \cF_{g B} \right] \circ T_{g} \right) \times \Psi  \right], \\
\end{split}
\]
and so $\bE \left[\Phi \, | \,  \cF_{g B} \right] \circ T_{g}$ is 
$\bE \left[\Phi \, | \, \cF_B \right]$, as claimed.
\end{proof}

Denote by $\cK(\bR^d)$ the collection of compact subsets of $\bR^d$.

\begin{Lemma}
\label{L:compact_approximation}
For any $B \in \cB(\bR^d)$, the $\sigma$-fields $\cF_B$ and
$\sigma\{\Pi(C) : C \in \cK(\bR^d), \, C \subseteq B\} \vee \cN$ coincide.
\end{Lemma}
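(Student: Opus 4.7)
The plan is to prove the two inclusions separately, with only one being nontrivial. Since every compact subset of $\bR^d$ has finite Lebesgue measure, we have $\cK(\bR^d) \subseteq \cC(\bR^d)$, so
\[
\sigma\{\Pi(C) : C \in \cK(\bR^d), \, C \subseteq B\} \vee \cN \subseteq \cF_B
\]
trivially. The content of the lemma is the reverse inclusion, and for this it suffices to show that for each fixed $C \in \cC(\bR^d)$ with $C \subseteq B$, the random variable $\Pi(C)$ is measurable with respect to the compact-generated $\sigma$-field augmented by $\cN$.

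The idea is to approximate $C$ from inside by compact subsets. By the inner regularity of Lebesgue measure on Borel sets of finite measure, I can choose an increasing sequence of compact sets $K_n \subseteq C$ with $\Lambda(C \setminus K_n) \downarrow 0$. Using the finite additivity property of $\Pi$ applied to the disjoint decomposition $C = K_n \sqcup (C \setminus K_n)$, we have
\[
\Pi(C) = \Pi(K_n) + \Pi(C \setminus K_n) \quad \text{a.s.}
\]
Since $\Pi(C \setminus K_n)$ has distribution $\mu_{\Lambda(C \setminus K_n)}$, and since a convolution semigroup $(\mu_t)_{t \ge 0}$ satisfies $\mu_t \Rightarrow \mu_0 = \delta_0$ as $t \downarrow 0$ by weak right-continuity, the random variables $\Pi(C \setminus K_n)$ converge to $0$ in distribution, hence in probability.

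Consequently $\Pi(K_n) \to \Pi(C)$ in probability. Passing to an almost surely convergent subsequence, we obtain that $\Pi(C)$ equals, $\bP$-a.s., the pointwise limit of a sequence of random variables each of which is measurable with respect to $\sigma\{\Pi(K) : K \in \cK(\bR^d), \, K \subseteq B\}$. Hence $\Pi(C)$ is measurable with respect to $\sigma\{\Pi(K) : K \in \cK(\bR^d), \, K \subseteq B\} \vee \cN$, completing the proof. There is no real obstacle here; the only place where one must be slightly careful is to invoke the weak right-continuity of the convolution semigroup $(\mu_t)$ at $t = 0$ (which is part of the definition of $\mu_t$ in the introduction) to conclude that $\Pi(C \setminus K_n) \to 0$ in probability.
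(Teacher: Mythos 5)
Your proof is correct and follows essentially the same route as the paper: inner regularity to get an increasing sequence of compact subsets, finite additivity to write $\Pi(C) = \Pi(K_n) + \Pi(C \setminus K_n)$, the weak continuity of $(\mu_t)$ at $t=0$ to get convergence in probability, and a subsequence to upgrade to almost sure convergence. You are slightly more explicit than the paper about the trivial inclusion and the final measurability step, but the argument is the same.
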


\begin{proof}
Suppose that $A \in \cC(\bR^d)$.  By the inner regularity
of Lebesgue measure, there exist compact sets
$C_1 \subseteq C_2 \subseteq \ldots \subseteq C$ such that
$\lim_{n \rightarrow \infty} \Lambda(C_n) = \Lambda(C)$.
We have $\Pi(C) = \Pi(C_n) + \Pi(C \setminus C_n)$
almost surely.  Also, $\Pi(C \setminus C_n)$
has distribution $\mu_{\ell_n}$, where $\ell_n = \Lambda(C \setminus C_n)$,
and so $\Pi(C \setminus C_n)$ converges to $0$ in probability
as $n \rightarrow \infty$.  Hence, there exists a subsequence $(n_k)$
such that $\Pi(C \setminus C_{n_k})$ converges to $0$ almost surely
as $k \rightarrow \infty$, so that $\Pi(C_{n_k})$ converges to $\Pi(C)$
almost surely.  The result follows directly from this observation.
\end{proof}

\begin{Lemma}
\label{L:Kolmogorov}
Suppose that $A_h \in \cB(\bR^d)$, $h \in \bZ$, is a family of sets with
the properties $A_{h'} \subseteq A_{h''}$ for $h' < h''$, 
$\Lambda\left(\bigcap_{h \in \bZ} A_h \right) = 0$, and
$\Lambda\left(\bR^d \setminus \bigcup_{h \in \bZ} A_h \right) = 0$.
Then,
$\bigcap_{h \in \bZ} \cF_{A_h} = \cN$ and
$\bigvee_{h \in \bZ} \cF_{A_h} = \cF$.
\end{Lemma}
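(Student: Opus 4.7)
The plan is to prove the two statements in parallel by reducing both to an approximation argument of the same flavor as Lemma~\ref{L:compact_approximation}, together with the elementary observation that $\cF_B$ and $\cF_{B'}$ are independent whenever $B \cap B' = \emptyset$. This latter fact follows because the generating random variables $\Pi(C)$ and $\Pi(C')$ with $C \subseteq B$, $C' \subseteq B'$ compact are jointly independent whenever $C$ and $C'$ are disjoint, so the corresponding $\pi$-systems are independent, and the $\sigma$-fields inherit independence (unaffected by augmenting with $\cN$).

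First I would establish the ``union'' half $\bigvee_{h} \cF_{A_h} = \cF$. By Lemma~\ref{L:compact_approximation} it suffices to show that $\Pi(C) \in \bigvee_h \cF_{A_h}$ for every compact $C \subseteq \bR^d$. Writing $\Pi(C) = \Pi(C \cap A_h) + \Pi(C \setminus A_h)$, the second summand has distribution $\mu_{\Lambda(C \setminus A_h)}$, and $\Lambda(C \setminus A_h) \downarrow \Lambda(C \setminus \bigcup_h A_h) = 0$ as $h \uparrow \infty$ by the hypothesis on $\bigcup_h A_h$. Hence $\Pi(C \setminus A_h) \to 0$ in probability, and passing to a subsequence yields $\Pi(C \cap A_h) \to \Pi(C)$ almost surely. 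Since each $\Pi(C \cap A_h)$ is $\cF_{A_h}$-measurable, $\Pi(C)$ lies in the completed $\sigma$-field $\bigvee_h \cF_{A_h}$.

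For the ``intersection'' half, the idea is to apply exactly the same approximation argument to the sequence of complements $B_n := \bR^d \setminus A_{-n}$, $n \ge 0$. These sets are nested upward and $\bigcup_n B_n = \bR^d \setminus \bigcap_h A_h$ has full Lebesgue measure, so the argument of the previous paragraph (now approximating $\Pi(C)$ by $\Pi(C \cap B_n) = \Pi(C) - \Pi(C \cap A_{-n})$, using $\Lambda(C \cap A_{-n}) \to 0$) gives $\bigvee_n \cF_{B_n} = \cF$. Now suppose $\Phi$ is any bounded $\bigcap_h \cF_{A_h}$-measurable function. Then for every $n \ge 0$, $\Phi$ is $\cF_{A_{-n}}$-measurable and hence independent of $\cF_{B_n}$ by the disjoint-support remark above. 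A standard $\pi$-$\lambda$ argument then shows that $\Phi$ is independent of $\bigvee_n \cF_{B_n} = \cF$, so $\Phi$ is independent of itself, and therefore constant almost surely. This gives $\bigcap_h \cF_{A_h} \subseteq \cN$, and the reverse inclusion is immediate.

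The only mildly subtle step is the verification that $\cF_B$ and $\cF_{B'}$ are genuinely independent as completed $\sigma$-fields when $B \cap B' = \emptyset$, but this is the same $\pi$-system argument one would use for the Kolmogorov zero--one law in the classical setting. Once that is in hand, both halves of the lemma reduce to the single approximation device used in Lemma~\ref{L:compact_approximation}.
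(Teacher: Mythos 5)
Your proof is correct, but it takes a genuinely different route from the paper's. The paper proves the triviality of $\bigcap_h \cF_{A_h}$ via the reverse martingale convergence theorem: it approximates an arbitrary bounded $\Psi$ in $L^1(\bP)$ by $F(\Pi(C_1),\ldots,\Pi(C_n))$ with $C_1,\ldots,C_n$ compact subsets of $\bR^d \setminus \bigcap_h A_h$ (using Lemma~\ref{L:compact_approximation}), and then observes that for $h$ small enough this approximant is independent of $\cF_{A_h}$, so that $\bE[\Psi \mid \bigcap_h \cF_{A_h}]$ is within $\epsilon$ of a constant for every $\epsilon$; the union half is left to the reader. You instead run a classical Kolmogorov-type argument: you first prove $\bigvee_n \cF_{B_n} = \cF$ for $B_n = \bR^d \setminus A_{-n}$ by the same in-probability/subsequence approximation as in Lemma~\ref{L:compact_approximation}, and then deduce that any $\bigcap_h \cF_{A_h}$-measurable $\Phi$ is independent of each $\cF_{B_n}$ (disjoint supports), hence of $\cF$, hence of itself. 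Both proofs rest on the same two ingredients (compact/inner approximation and independence of the noise over disjoint Borel sets), but your extraction of the conclusion avoids martingale convergence entirely, and it also sidesteps the one delicate step in the paper's version, namely that the compact sets $C_1,\ldots,C_n$ eventually lie in the complement of $A_h$ --- which is immediate when the $A_h$ are closed (as in the application to the sets $D_t$) but needs extra care for general Borel $A_h$; your argument only uses $\Lambda(C \cap A_{-n}) \to 0$, which holds with no topological assumption. The one point you flag but do not fully write out --- joint independence of $\cF_B$ and $\cF_{B'}$ for disjoint $B, B'$, not merely pairwise independence of individual masses --- does follow as you indicate, by decomposing finitely many generators into the atoms of the algebra they generate and invoking the defining joint independence of the noise over pairwise disjoint sets, so the gap is only expository.
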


\begin{proof}
Consider the claim regarding $\bigcap_{h \in \bZ} \cF_{A_h}$.
It suffices to show that if $\Psi$ is any bounded, non-negative,
$\cF$-measurable random variable, then 
$\bE\left[\Psi \, | \, \bigcap_{h \in \bZ} \cF_{A_h} \right]$
is almost surely constant.  By the reverse martingale
convergence theorem, the latter random variable is almost
surely $\lim_{h \rightarrow -\infty} \bE\left[\Psi \, | \,  \cF_{A_h} \right]$.

Set $B = \bR^d \backslash \bigcap_{h \in \bZ} A_h$.
Note for any $C \in \cC(\bR^d)$ that 
$\Pi(C) = \Pi(C \cap \bigcap_{h \in \bZ} A_h) + \Pi(C \cap B) = \Pi(C \cap B)$ almost surely because $\Lambda(C \cap \bigcap_{h \in \bZ} A_h)=0$,
and hence $\cF = \cF_B$.
Thus, by Lemma~\ref{L:compact_approximation}, 
$\cF = \sigma\{\Pi(C) : C \in \cK(\bR^d), \, C \subseteq B\} \vee \cN$.

Therefore, given any $\epsilon > 0$ there exist compact
subsets $C_1, \ldots, C_n$ of $B$ and a bounded
Borel function $F : \bR^n \rightarrow \bR_+$ such that
\[
\bE \left[ \left|\Psi - F(\Pi(C_1), \ldots, \Pi(C_n))\right| \right] < \epsilon,
\]
and so 
\[
\bE
\left[
\left|
\bE\left[\Psi \, | \,  \cF_{A_h} \right] 
- \bE\left[F(\Pi(C_1), \ldots, \Pi(C_n)) \, | \,  \cF_{A_h} \right]
\right|
\right]
< \epsilon
\]
for all $h \in \bZ$.  

When $h$ is sufficiently small, the compact sets 
$C_1, \ldots, C_n$ are all contained in the complement of $A_h$.
In that case, the random variable $F(\Pi(C_1), \ldots, \Pi(C_n))$
is independent of the $\sigma$-field $\cF_{A_h}$ and hence
$\bE\left[F(\Pi(C_1), \ldots, \Pi(C_n)) \, | \,  \cF_{A_h} \right]$
is almost surely constant.  Therefore, 
$\bE\left[\Psi \, | \, \bigcap_{h \in \bZ} \cF_{A_h} \right]$
is within $L^1(\bP)$ distance $\epsilon$
of a constant for all $\epsilon > 0$
and so this random variable is itself almost surely constant,
as required.

The claim regarding $\bigvee_{h \in \bZ} \cF_{A_h}$ can be established
similarly, and we leave the proof to the reader.
\end{proof}

\section{Proof of Theorem~\ref{T:main}}
\label{S:main_proof}

Suppose that the group $G$ is compact.  
By Remark~\ref{R:compact_subgroups}, there is an invertible matrix
$h$ such that $g (h U) = (h U)$ for all $g \in G$,
where $U$ is the closed unit ball around $0$ in $\bR^d$ 
for the usual Euclidean metric. The random variable
$\Pi(hU)$ is $\cI_G$-measurable and, 
by the assumption on $\mu$, has distribution
$\mu_{\Lambda(hU)}$ that is not concentrated at a point.  Therefore, $\cI_G$
contains sets that have probability strictly between $0$ and $1$.

Conversely, suppose that the closed group $G$ is not compact.   
Then, by Theorem 1 of
\cite{MR0197577},
there is matrix $g \in G$ such that the cyclic group $\{g^h : h \in \bZ\}$
does not have a compact closure. 
We note that this result is non-trivial
and is related to the ``Auerbach problem'' 
-- see also \cite{MR0120127, MR0191997}.

Let $(D_t)_{0 < t < \infty}$ be the corresponding
increasing family of closed subsets of $\bR^d$ guaranteed
by Lemma~\ref{L:shrinking_sets} below.  Set $\cG_t = \cF_{D_t}$.
Because 
$\Lambda\left(\bR^d \setminus \bigcup_{0 < t < \infty} D_t\right) = 0$, 
it follows from Lemma~\ref{L:Kolmogorov}
that 
$\cF = \bigvee_{0 < t < \infty} \cG_t$.

Suppose, contrary to the statement of the theorem,
that there is a bounded $\cI_G$-measurable function
$\Phi: \Omega \rightarrow \bR_+$  that is not almost surely
equal to a constant.  By the martingale
convergence theorem,
\[
\Phi 
= 
\bE\left[\Phi \, \bigg | \, \bigvee_{n=1}^\infty \cG_n \right]
=
\lim_{n \rightarrow \infty} \bE\left[\Phi \, | \, \cG_n \right], 
\quad \text{$\bP$-a.s.},
\]
where the limit is taken over the positive integers.  Consequently,
there is a positive integer $N$ such that 
$\bE\left[\Phi \, | \, \cG_N \right]$ is not $\bP$-almost surely
equal to a constant.  In particular, the variance of
$\bE\left[\Phi \, | \, \cG_N \right]$ is strictly positive.

Because $\Lambda\left(\bigcap_{0 < t < \infty} D_t\right) = 0$,
it follows from Lemma~\ref{L:Kolmogorov} that
$\bigcap_{0 < t < \infty} \cG_t = \cN$.
For a positive integer $n$, set $\cH_n = \cG_{\frac{N}{n}}$.
Note that $\cH_1 \supseteq \cH_2 \supseteq \ldots$
and $\bigcap_{n=1}^\infty \cH_n = \cN$.
By the reverse martingale convergence theorem, 
\[
\lim_{n \rightarrow \infty} \bE\left[\Phi \, \bigg | \, \cH_n \right]
=
\bE\left[\Phi \, \bigg | \, \bigcap_{n=1}^\infty \cH_n \right]
=
\bE\left[\Phi\right], 
\quad \text{$\bP$-a.s. and in $L^2(\bP)$}.
\]
In particular, the variance of $\bE\left[\Phi \, | \, \cH_n \right]$
converges to $0$ as $n \rightarrow \infty$.

For a non-negative integer $m$, set $\cE_m = \cF_{g^m D_N}$. Thus,
$\cE_0 = \cG_N = \cH_1$.
It follows from Lemma~\ref{L:equivariance} that the distribution of
$\bE\left[\Phi \, | \, \cE_m \right]$ is 
that of $\bE\left[\Phi \, | \, \cG_N \right]$ for all  $m$.
In particular, the variance of 
$\bE\left[\Phi \, | \, \cE_m \right]$ 
is the same as that of 
$\bE\left[\Phi \, | \, \cG_N \right]$ for all $m$.
Because $g^h D_{t''} \subseteq D_{t'}$ for 
$0 < t' < t'' < \infty$ and $h$ sufficiently large, we have 
for any given positive integer $n$ that there exists an integer $m$
for which  $\cE_m \subseteq \cH_n$.  In that case,
\[
\bE\left[\bE\left[\Phi \, | \, \cE_m \right] \right] 
= \bE[\Phi] 
= \bE\left[\bE\left[\Phi \, | \, \cH_n \right] \right]
\]
and, by the conditional Jensen's inequality,
\[
\bE\left[\bE\left[\Phi \, | \, \cE_m \right]^2 \right] 
\le
\bE\left[\bE\left[\Phi \, | \, \cH_n \right]^2 \right]
\]
so that the variance of $\bE\left[\Phi \, | \, \cE_m \right]$
is dominated by the variance of $\bE\left[\Phi \, | \, \cH_n \right]$.

The former variance does not depend on $m$ and is strictly positive,
whereas the latter variance converges to $0$ as $n \rightarrow \infty$,
so we arrive at a contradiction.

\section{Consequences of the Jordan canonical form}
\label{S:Jordan}

Let $A$ be a $d \times d$ matrix with entries from the field $\bC$
of complex numbers.  For convenience, we say that $A$
has {\em order} $d$.
We recall some facts from linear algebra that may be found, for example,
in Ch 3 of \cite{MR1084815}.

The {\em geometric multiplicity}
of an eigenvalue $\lambda$ of $A$ is the dimension of the
null space of the matrix $\lambda I - A $ 
(that is, the the geometric multiplicity is the maximal number of linearly
independent solutions of the equation $A x = \lambda x$, $x \in \bC^d$).
The {\em algebraic multiplicity} of the eigenvalue $\lambda$ is
the multiplicity of $\lambda$ as a root of the characteristic
equation $t \mapsto \det(t I - A)$ (that is, the algebraic multiplicity
is the largest positive
integer $m$ such that the polynomial $(t - \lambda)^m$
divides the polynomial $\det(t I - A)$).  

Suppose that the sum of the geometric multiplicities of the eigenvalues
of $A$ is $k$.  Because eigenvalues corresponding to distinct eigenvalues
are linearly independent, $k$ is the dimension of the sum of the
null spaces of $\lambda I - A$ as $\lambda$ ranges over the eigenvalues of $A$.

For a positive integer $r$ and $\zeta \in \bC$, let
$J_r(\zeta)$ be the $r \times r$ matrix given by
\[
J_r(\zeta)_{ij} :=
\begin{cases}
\zeta, & i=j,\\
1, & j=i+1,\\
0, & \text{otherwise}.
\end{cases}
\]
That is, every entry of $J_r(\zeta)$ on the diagonal is $\zeta$, 
every entry on the super-diagonal is $1$, and every other entry is $0$.

There exists an invertible matrix $S$ with entries from $\bC$ such that
$J := S^{-1} A S$ is block diagonal with blocks 
$J_{d_1}(\lambda_1), \ldots, J_{d_k}(\lambda_k)$.  The numbers
$\lambda_1, \ldots, \lambda_k$ are all eigenvalues of $A$, with each distinct
eigenvalue appearing at least once.  The geometric multiplicity of
an eigenvalue $\lambda$ is the number of times that $\lambda$ appears in the
list $\lambda_1, \ldots, \lambda_k$.  The algebraic multiplicity of
$\lambda$ is the sum of the orders of the corresponding blocks.
The matrix $J$, which is unique up to a re-ordering of the $\lambda_p$,
is the {\em complex Jordan canonical form} of $A$.

An order $2r$ matrix of the form 
\[
\begin{pmatrix}
J_r(\zeta) & 0 \\
0 & J_r(\bar \zeta) 
\end{pmatrix},
\]
where $\bar \zeta = c - id$ is the complex conjugate of $\zeta = c + id$, is similar
to a block matrix $C_r(\zeta)$ in which each block has order $2$, the
diagonal blocks are all of the form
\[
\begin{pmatrix}
c & d \\
-d & c 
\end{pmatrix},
\]
the super-diagonal blocks are all  identity matrices, and the
remaining blocks are all $0$ matrices..  

Suppose now that the entries of $A$ are in $\bR$.  Define $\lambda_1, \ldots,
\lambda_k$ and $J$ as before.  If some $\lambda_p$ is not real, then
its complex conjugate $\bar \lambda_p$ appears as $\lambda_q$ for some
$q$ with $d_q = d_p$.  There is an invertible matrix 
$T$ with entries from $\bR$ such that
$K := T^{-1} A T$ is block diagonal with blocks 
$J_{a_1}(\eta_1), \ldots, J_{a_s}(\eta_s), 
C_{b_1}(\kappa_1), \ldots, C_{b_t}(\kappa_t)$.  The numbers
$\eta_1, \ldots, \eta_s$ are the real eigenvalues in the list
$\lambda_1, \ldots, \lambda_k$ while the numbers $\kappa_1, \ldots \kappa_t$
come from picking one member of each complex conjugate pair
of non-real eigenvalues in the list. If $\eta_m = \lambda_\ell$, then
$a_m = d_\ell$, and if $\kappa_n \in \{\lambda_p, \lambda_q\}$
with $d_p = d_q$, then $b_n =  d_p =  d_q$.  
The matrix $K$ is the {\em real Jordan canonical form} of $A$.

Suppose now that $A \in \Gamma = (+1) \times \mathrm{Sl}(\bR^d) \sqcup (-1) \times \mathrm{Sl}(\bR^d)$,
so that $\det A = \lambda_1^{d_1} \cdots \lambda_k^{d_k} \in \{\pm 1\}$.
The matrix $A$ is similar to an orthogonal matrix if and only if $k=d$
(equivalently, $d_1 = \cdots = d_k = 1$) and $|\lambda_1| = \cdots = |\lambda_k| = 1$.
Also,  $A$ is similar to an orthogonal matrix if and only if the set of matrices 
$\{A^h : h \in \bZ\}$ is bounded.
Hence, if $\{A^h : h \in \bZ\}$ is not bounded, then
 $A$ is not similar to an orthogonal matrix and either $|\lambda_\ell| < 1$
for some $\ell$ or $|\lambda_1| = \cdots = |\lambda_k| = 1$ and $d_\ell \ge 2$
for some $\ell$.  It follows that one or more of the blocks in the real Jordan
canonical form of $A$ must have one of the following forms:
\begin{itemize}
\item[a)]
$J_a(\eta)$ with $a \ge 2$ and $\eta \in \{\pm 1\}$,
\item[b)]
$C_b(\kappa)$ with $b \ge 2$ and $\eta \in \{z \in \bC \setminus \bR : |z| = 1\}$,
\item[c)]
$J_a(\eta)$ with $a \ge 1$ and $\eta \in \{x \in  \bR : |x| < 1\}$,
\item[d)]
$C_b(\kappa)$ with $b \ge 1$ and $\eta \in \{z \in \bC \setminus \bR : |z| < 1\}$.
\end{itemize}

Consider case (a).  Note that $J_a(\eta) = \eta I + N_a$, where $N_a := J_a(0)$
is the order $a$ matrix with $1$ at every position on the super-diagonal
and $0$ elsewhere.  Write $e_1, \ldots, e_a$ for the standard basis of column
vectors of $\bR^d$; that is, $e_i$ has $1$ in the $i^{\mathrm{th}}$ coordinate
and $0$ elsewhere.  Observe that 
\[
N_a^j e_i = 
\begin{cases}
e_{i-j}, & \text{if $j < i$}, \\
0, & \text{otherwise}.
\end{cases}
\]
Thus,
\[
\begin{split}
& J_a(\eta)^h(x_1 e_1 + \cdots + x_a e_a) \\
& \quad =
\sum_{i=1}^a x_i \sum_{j=0}^h \binom{h}{j} \eta^{h-j} N_a^j e_i \\
& \quad =
\sum_{i=1}^a x_i \sum_{j=0}^{i-1} \binom{h}{j} \eta^{h-j} e_{i-j} \\
& \quad =
\sum_{i=1}^a x_i \sum_{j=0}^{i-1} \binom{h}{j} \eta^{h-j} e_{i-j} \\
& \quad =
\sum_{i=1}^a x_i \sum_{\ell=1}^{i} \binom{h}{i - \ell} \eta^{h-i+\ell} e_{\ell} \\
& \quad =
\eta^h \sum_{\ell=1}^a 
\left[\sum_{i=\ell}^a x_i 
\frac{h(h-1)\cdots(h-(i - \ell))}{(i-\ell)!} \eta^{-(i-\ell)}\right] 
e_\ell. \\
\end{split}
\]

Write $\langle \cdot, \cdot \rangle$ and $\| \cdot \|$ for the
usual Euclidean inner product and norm on $\bR^a$, and
set $V_\rho := \{x \in \bR^a : |\langle x, e_a \rangle| \le \rho \|x\|\}$,
for $0 < \rho < 1$.    Note that the sets $V_\rho$
have the properties 
\begin{itemize}
\item
$V_{\rho'} \subset V_{\rho''}$ for $0 < \rho' < \rho'' < 1$,
\item
$\bigcup_{0 < \rho < 1} V_\rho = \bR^a \setminus \{t e_a : t \ne 0\}$,
\item
$\bigcap_{0 < \rho < 1} V_\rho = \{x \in \bR^a : \langle x, e_a \rangle = 0\}$.
\end{itemize}
In particular, the sets $\bR^a \setminus \bigcup_{0 < \rho < 1} V_\rho$ 
and $\bigcap_{0 < \rho < 1} V_\rho$ are both Lebesgue null.
It is not difficult to see from the above that
\[
J_a(\eta)^h V_{\rho''} \subseteq V_{\rho'},  \quad 0 < \rho' < \rho'' < 1, 
\quad \text{$h$ sufficiently large}.
\]

A similar argument shows that the conclusion of the previous paragraph
hold in case (c), provided $a \ge 2$.  If $a=1$ in case (c), then the sets 
$B_\epsilon := \{x \in \bR :|x| \le \epsilon\}$, $0 < \epsilon < \infty$, 
have the properties
\begin{itemize}
\item
$B_{\epsilon'} \subset B_{\epsilon''}$ for $0 < \epsilon' < \epsilon'' < \infty$,
\item
$\bigcup_{0 < \epsilon < \infty} B_\epsilon = \bR$,
\item
$\bigcap_{0 < \epsilon < \infty} B_\epsilon = \{ 0\}$.
\end{itemize}
In particular, the sets $\bR \setminus \bigcup_{0 < \epsilon < \infty} B_\epsilon$ 
and $\bigcap_{0 < \epsilon < \infty} B_\epsilon$ are both Lebesgue null.
It is clear that
\[
J_1(\eta)^h B_{\epsilon''} \subseteq B_{\epsilon'},  \quad 0 < \epsilon' < \epsilon'' < \infty, 
\quad \text{$h$ sufficiently large}.
\]

Analogous constructions for the cases (b) and (d) 
and  some straightforward
further argument complete the proof of the following result.

\begin{Lemma} 
\label{L:shrinking_sets}
Suppose that the matrix $A \in \Gamma$ is such that the
cyclic group $\{A^h : h \in \bZ\}$ does not have a compact closure.  
Then, there exists
a collection  $(D_t)_{0 < t < \infty}$ of closed subsets of
$\bR^d$ with the following properties:
\begin{itemize}
\item
$D_{t'} \subset D_{t''}$ for $0 < t' < t'' < \infty$,
\item
$\Lambda\left(\bR^d \setminus \bigcup_{0 < t < \infty} D_t\right) = 0$,
\item
$\Lambda\left(\bigcap_{0 < t < \infty} D_t\right) = 0$,
\item
$A^h D_{t''} \subseteq D_{t'}$ for $0 < t' < t'' < \infty$ and $h$ sufficiently large.
\end{itemize}
\end{Lemma}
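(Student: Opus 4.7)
The plan is to reduce to the real Jordan canonical form of $A$ and assemble the shrinking family block-by-block. Write $A = T K T^{-1}$ for an invertible real matrix $T$ with $K$ block diagonal, and decompose $\bR^d = V_1 \oplus \cdots \oplus V_m$ into the corresponding $K$-invariant subspaces. Since $\{A^h : h \in \bZ\}$ has non-compact closure, at least one block, say the one on $V_j$, is of one of the types (a)--(d) discussed above, and I would single it out and treat the remaining blocks as passive ``spectators.''

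Cases (a) and (c) have been sketched; cases (b) and (d) would be handled by the obvious analogues. For (b), the real block $C_b(\kappa)$ is conjugate over $\bC$ to $J_b(\kappa) \oplus J_b(\bar\kappa)$, whose $h$-th power pushes everything polynomially into a distinguished two-plane in $\bR^{2b}$, so I would replace the linear functional $\langle \cdot, e_a\rangle$ by the orthogonal projection $P$ onto that plane and work with the cones $\{x \in \bR^{2b} : \|Px\| \le \rho \|x\|\}$, $\rho \in (0,1)$. For (d), the factor $|\kappa|^h$ drives a uniform geometric contraction (beating the polynomial contribution of the nilpotent part when $b \ge 2$), and closed Euclidean balls in $\bR^{2b}$ play the role of the $B_\epsilon$ of case (c).

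Let $U_t \subseteq V_j$ be the resulting shrinking family, reparametrized so that $t$ ranges over $(0, \infty)$. I would then set
\[
\tilde D_t := U_t \oplus \bigoplus_{i \ne j} V_i \subseteq \bR^d
\qquad \text{and} \qquad
D_t := T \tilde D_t.
\]
Since each $V_i$ is $K$-invariant and $K^h U_{t''} \subseteq U_{t'}$ for $h$ sufficiently large, the same containment holds for $\tilde D_t$, and conjugation by $T$ yields $A^h D_{t''} \subseteq D_{t'}$. Closedness is preserved by the linear homeomorphism $T$, and monotonicity is immediate. The remaining measure-theoretic properties follow from the corresponding statements for $U_t$ in $V_j$: $\bigcap_t U_t$ and $V_j \setminus \bigcup_t U_t$ are Lebesgue null in $V_j$, so by Fubini (against the full other blocks) and the fact that $T$ preserves Lebesgue null sets, the analogous properties for $D_t$ in $\bR^d$ hold.

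The step I expect to require the most care is the uniform convergence underlying each single-block construction, and in particular the inclusion $J_a(\eta)^h V_{\rho''} \subseteq V_{\rho'}$ of case (a) together with its analogue in case (b). Since each $V_\rho$ is a cone, it suffices to verify the inclusion on the compact section $V_{\rho''} \cap \{\|x\| = 1\}$, where the explicit expansion of $J_a(\eta)^h x$ shows that $|\langle J_a(\eta)^h x, e_a\rangle| = |x_a|$ stays bounded while $\|J_a(\eta)^h x\|$ has strictly positive polynomial order in $h$ with a continuous and nonvanishing leading coefficient on that section. Once this uniformity is in hand, all remaining steps are routine book-keeping.
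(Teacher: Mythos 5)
Your overall architecture --- pass to the real Jordan form, single out one ``bad'' block, build a shrinking family there, take the direct sum with the spectator blocks, and pull everything back by $T$ --- is exactly the paper's, and the assembly step (closedness under the linear homeomorphism, Fubini for the two null conditions, block-wise equivariance) is fine. The problem is precisely the step you flag as delicate: the inclusion $J_a(\eta)^h V_{\rho''} \subseteq V_{\rho'}$ for the symmetric cones $V_\rho = \{x : |\langle x, e_a\rangle| \le \rho\|x\|\}$ is false, and your compactness argument does not establish it. The leading coefficient of $\|J_a(\eta)^h x\|$, viewed as a polynomial in $h$ of degree $a-1$, is proportional to $|x_a| = |\langle x, e_a\rangle|$, which vanishes on the whole subsphere $\{x_a = 0\}$ of the section $V_{\rho''}\cap\{\|x\|=1\}$; so it is not ``nonvanishing on that section,'' and there is no uniform polynomial growth of the norm. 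Concretely, take $a=2$, $\eta=1$, so $J_2(1)^h = \bigl(\begin{smallmatrix}1&h\\0&1\end{smallmatrix}\bigr)$, and let $x_h = (1,-1/h)$. Then $|\langle x_h,e_2\rangle|/\|x_h\| < 1/h < \rho''$ for $h$ large, so $x_h \in V_{\rho''}$, yet $J_2(1)^h x_h = (0,-1/h)$ satisfies $|\langle\,\cdot\,,e_2\rangle| = \|\cdot\|$ and hence lies in no $V_{\rho'}$ with $\rho'<1$. Since such a point exists for every large $h$, the inclusion fails for every large $h$; the same mechanism (apply $J^{-h}$ to $e_a$) defeats the cones for every $a\ge2$, and likewise the projection cones you propose in case (b). The paper's own sketch asserts the same inclusion, so you have faithfully reproduced its route, but your attempted justification is exactly where the gap becomes visible: the cones are symmetric about the hyperplane $\{x_a=0\}$ and therefore contain the stable directions $J^{-h}e_a$ of $J^h$, along which the norm contracts at the rate $h^{-(a-1)}$, precisely cancelling the smallness of $|x_a|$ and keeping the ratio at $1$.

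Cases (c) and (d), where the eigenvalue has modulus less than one, are genuinely unproblematic: there $J^h\to 0$ in operator norm, so closed balls work, as you say. To repair cases (a) and (b) one must break the symmetry and adapt the sets to the direction of the drift, not merely to its magnitude. For $J_2(1)$, for instance, the signed ratio $x_1/x_2$ advances by exactly $h$ under $J_2(1)^h$, so the closed double wedges $D_c = \{x : x_2=0\}\cup\{x : x_1/x_2 \ge c\}$, with $c$ decreasing to $-\infty$ as the parameter increases, satisfy every requirement of the lemma; the higher-order blocks and the complex-pair blocks need an analogous construction keyed to the Jordan flag. Without something of this kind, the single-block inclusion --- and hence the proof --- does not go through as written.
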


\bigskip
\noindent
{\bf Acknowledgment:} We thank Omer Angel for giving a talk that sparked
our interest in the problem we investigate in this paper.

\providecommand{\bysame}{\leavevmode\hbox to3em{\hrulefill}\thinspace}
\providecommand{\MR}{\relax\ifhmode\unskip\space\fi MR }
\providecommand{\MRhref}[2]{%
  \href{http://www.ams.org/mathscinet-getitem?mr=#1}{#2}
}
\providecommand{\href}[2]{#2}

\end{document}